\documentclass[12pt]{article}
\usepackage{graphicx}
\usepackage{epsfig}
\usepackage{fancyhdr}
\usepackage{setspace}

\usepackage{amsmath,amsfonts,amssymb,amsthm}
\setlength{\unitlength}{1 cm}
\textheight=22.5cm \textwidth=16.5cm
\topmargin=-1cm
\oddsidemargin=2mm
\evensidemargin=2mm

\theoremstyle{plain}
\newtheorem{thm}{Theorem}[section]
\newtheorem{lem}{Lemma}[section]
\newtheorem{prop}{Proposition}[section]

\theoremstyle{definition}

\newcommand{\reftit}{\textit}    
\newcommand{\refis}{\textbf}     
\newcommand{\Rea}{\mathbb{R}}

\numberwithin{equation}{section}

\begin{document}

\author{Yevgeniy Kovchegov\footnote{Department of Mathematics, Oregon State University, Kidder Hall, Corvallis, OR 97331; Email:
kovchegy@math.oregonstate.edu; Phone: 541-737-1379; Fax: 541-737-0517.} \hspace{3.0cm} Ne\c{s}e Y{\i}ld{\i}z\footnote{Corresponding author: Department of Economics,
University of Rochester, 231 Harkness Hall, Rochester, NY
14627; Email:
nese.yildiz@rochester.edu; Phone: 585-275-5782; Fax: 585-256-2309.}}
\title{Orthogonal Polynomials for Seminonparametric Instrumental Variables Model\thanks{Many of the results of this paper were presented as part of a larger project at University of Chicago and Cowles Foundation, Yale University, econometrics research seminar in the spring of 2010, as well as the 2010 World Congress of the Econometric Society in Shanghai. We would like to thank participants of those seminars for valuable comments and questions. We would also like to thank the editors and an anonymous referee for valuable comments. This work was partially supported by a grant from the Simons Foundation ($\#$284262 to Yevgeniy Kovchegov).}}
\date{}
\maketitle

\begin{abstract}

We develop an approach that resolves a {\it polynomial basis problem} for a class of models with discrete endogenous covariate, and for a class of econometric models considered in the work of Newey and Powell \cite{np}, where  the endogenous covariate is continuous.  Suppose $X$ is a $d$-dimensional endogenous random variable, $Z_1$ and $Z_2$ are the instrumental variables (vectors), and $Z=\left(\begin{array}{c}Z_1 \\Z_2\end{array}\right)$. Now, assume that the conditional distributions of $X$ given $Z$ satisfy the conditions sufficient for solving the identification problem as in Newey and Powell \cite{np} or as in Proposition \ref{discr_density} of the current paper. That is, for a function $\pi(z)$ in the image space there is a.s. a unique function $g(x,z_1)$ in the domain space such that
$$E[g(X,Z_1)~|~Z]=\pi(Z) \qquad Z-a.s.$$
In this paper, for a class of conditional distributions $X|Z$, we produce an orthogonal polynomial basis $\{Q_j(x,z_1)\}_{j=0,1,\hdots}$ such that for a.e.  $Z_1=z_1$, and for all $j \in \mathbb{Z}_+^d$, and a certain $\mu(Z)$,
$$P_j(\mu(Z))=E[Q_j(X, Z_1)~|~Z ],$$  where $P_j$ is a polynomial of degree $j$.
This is what we call solving the {\it polynomial basis problem}.

Assuming the knowledge of $X|Z$ and an inference of $\pi(z)$, our approach provides a natural way of estimating the structural function of interest $g(x,z_1)$. Our polynomial basis approach is naturally extended to Pearson-like and Ord-like families of distributions.

\bigskip \noindent MSC Numbers: 33C45, 62, 62P20. 

\bigskip \noindent KEYWORDS: Orthogonal polynomials, Stein's method, nonparametric identification, instrumental variables, semiparametric methods.

\noindent
\end{abstract}

\section{Introduction}\label{intro}
In this paper we start with a small step of extending the set of econometric models for which nonparametric or semiparametric identification of structural functions is guaranteed to hold by showing completeness  when the endogenous covariate is discrete with unbounded support. Note that the case of discrete endogenous covariate $X$ with unbounded support is not covered by the sufficiency condition given in \cite{np}. Then, using the theory of differential equations we develop a novel orthogonal polynomial basis approach for a large class of the distributions given in Theorem 2.2 in \cite{np}, and in the case of discrete endogenous covariate $X$ for which the identification problem is solved in this paper. Our approach is new in economics and provides a natural link between identification and estimation of structural functions. We also discuss how our polynomial basis results can be extended to the case when the conditional distribution of $X|Z$ belongs to either the modified
Pearson or modified Ord family.

Experimental data are hard to find in many social sciences. As a result, social scientists
often have to devise statistical methods to recover causal effects of variables (covariates) on outcomes of interest. When the structural relationship between a dependent variable and the explanatory variables (i.e. $g(x,z_1)$) is parametrically specified Instrumental variables (IV) method is typically
used to get consistent and asymptotically normal estimators for the finite dimensional
vector of parameters, and thus, the structural function of interest.\footnote{A keyword search for ``instrumental variables'' in JSTOR returned more than 20,000 entries.} However the parametric
estimators are not robust to misspecification of the underlying structural relationship, $g(x, z_1)$. For
example, in the context of the analysis of consumer behavior recent empirical studies have
suggested the need to allow for a more flexible role for the total budget variable to capture the
observed consumer behavior at the microeconomic level. (See \cite{bck} and the references therein.) Failure of 
robustness of parametric methods raises the question whether it is possible to extend
the instrumental variables estimation to non-parametric framework. This question was
first studied in \cite{np}. Thus far,
however, the development of theoretical analysis and empirical implementation of nonparametric
instrumental variables methods have been slow. This may have to do with the fact that
identification is very hard to attain in these models. In addition, although there are some
results about convergence rates of nonparametric estimators of the structural function,
or on asymptotic distribution of the structural function evaluated at finitely many values
of covariates\footnote{See \cite{hhor, chenreiss, chenpouzo, cgs, horlee}.} to date the asymptotic distribution of the estimator for the structural function
is still unknown.

In this paper we suggest a semiparametric approach. This suggestion is motivated by the fact that sufficient conditions for nonparametric identification are closely related to the conditional distribution of the endogenous covariate given the instruments, which can be estimated non-parametrically since it only depends on observable quantities. We suggest a way of nonparametrically estimating the structural function while assuming that the conditional distribution of the endogenous covariate given instruments belongs to a large family for which identification of the structural function is guaranteed to hold. Ours is not the first paper which suggests taking a related semiparametric approach to attack this problem. \cite{hh1} and \cite{bck} both take a semiparametric approach in analyzing 
the Engel curve relationship. The semiparametric approach in \cite{hh1} is different from the one taken by \cite{bck}, and is more closely related to the one taken in this paper. In particular, \cite{bck} assume $g(X,Z_1)=h(X-\phi(Z_1^T\theta_1))+Z_1^T\theta_2$, with $\theta_1, \theta_2$ as finite dimensional parameters, $\phi$ having a known functional form, and $h$ non-parametric, but leave the distribution of $X$ given $Z$ to be more flexible than in \cite{hh1}. In contrast, \cite{hh1} leave specification of $g$ more flexible, but assume that the joint distribution of $X$ and $Z_2$ conditional on $Z_1$ is normal. 

The Engel curve relationship describes the expansion path for commodity demands as the household’s budget increases. In Engel curve analysis $Y$ denotes budget share of the household spent on a subgroup of goods, $X$ denotes log total expenditure allocated by the household to the subgroup of goods of interest, $Z_{1}$ are variables describing other observed characteristics of households, and $U$ represents unobserved heterogeneity across households. The (log) total expenditure variable, $X$, is a choice variable in the household’s allocation of income across consumption goods and savings. Thus, household's optimization suggests that $X$
is jointly determined with household's demands for particular goods and is, therefore, likely to be 
an endogenous regressor, or a regressor that is related to $U$, in the estimation of Engel curves. This means that the conditional mean of $Y$ estimated by nonparametric least squares regression
cannot be used to estimate the economically meaningful structural Engel curve relationship. Fortunately, as argued in \cite {bck}, household's
allocation model does suggest exogenous sources of income that will provide suitable instrumental
variables for total expenditure in the Engel curve regression. In particular, log disposable household income is believed to be exogenous because the driving unobservables like
ability are assumed to be independent of the preference orderings which play an important role in household's allocation decision and are included in $U$ (see \cite{hh1}). Consequently, log disposable income is usually taken as the excluded instrument, $Z_2$.
\cite{hh1} demonstrates that  log expenditure and log disposable income variables are both well characterized by
joint normality, conditional on other variables describing household characteristics. Under the assumption that the joint distribution of $X$ and $Z_2$ conditional on $Z_1$ is normal \cite{hh1} provide a semiparametric estimator for the structural Engel curve and give convergence rates for their estimator. In parametric models normality is typically
associated with nice behavior, but in a nonparametric
regression with endogenous regressors the situation is very different. Indeed, it is well established that joint normality can lead
to very slow rates of convergence (see \cite{bck, dfr, st}). In contrast to \cite{hh1} we suggest an estimation method that is directly related to the information contained in the identification condition and that covers any conditional distribution of $X$ given $Z$ (not just normal distribution) that belongs to a large family for which identification of the structural function is known to hold. By exploiting this information our method eliminates one step of estimation. As a result, we expect estimators that are based on our method will have a faster rate of convergence.  Specifically, the case where the joint distribution of $X$ and $Z_2$ conditional on $Z_1$ is normal as in \cite{hh1} fits right into the orthogonal polynomial framework of this paper. This correspondence will be pointed out in a remark in Subsection \ref{res_cts}. The follow-up paper that includes a least square analysis for normal conditional distributions is being prepared by the authors.

Our approach to choosing the orthogonal polynomials for approximating structural function is semiparametric and is motivated by the form of the conditional density (either with respect to Lebesgue or counting measure) of covariates given instruments. Using the form of this density function we can derive a second-order Stein operator (called Stein-Markov operator in \cite{schoutens}) whose eigenfunctions are orthogonal polynomials (in covariates) under certain sufficient conditions. This step utilizes the generator approach from Stein's theory originated in Barbour \cite{barbour90} and extensively studied in Schoutens \cite{schoutens}. One could use the eigenfunctions of the Stein-Markov operator to approximate the structural functions of interest in such models. Since the conditional expectations of these orthogonal basis functions given instruments are known up to a certain function of the instruments (namely, they are polynomials in $\mu(Z)$, which will be defined below), this approach is likely to simplify estimation. The in-depth  information on Stein's method and Stein operators can be found in \cite{barbour, barbour90, shao, schoutens, stein} and references therein.

A common way of estimating the structural function, which depends on the endogenous regressor $X$, starts with picking a basis, $\{Q_j\}_{j=1}^\infty$, for the space the structural function of interest belongs to. Finitely many elements of this basis is used to approximate the structural function. To estimate the coefficients on the elements of the basis, both the left hand side, or dependent variable, and the finite linear combination of the basis functions are first projected on the space defined by the instrument $Z$, and then the projection of the dependent variable is regressed onto the linear combination of the projections of basis functions. When this is done, typically, the choice of basis functions has little to do with the conditional distribution of $X|Z$, and hence, with the conditions that ensure identification of the structural function. As a result, the projections of the basis functions on the instrument are not known analytically, but have to be estimated by
 non-parametric regression. In this paper, we propose a method that links the condition for identification of the structural function to the choice of the basis used to approximate this function in estimation stage. We do this by exploiting the form of the conditional density of covariates given instruments. As suggested above we propose the use of the eigenfunctions of the Stein-Markov operator to approximate the structural function. Since the conditional expectations of these orthogonal basis functions given instruments are known up to a certain function of the instruments, this would eliminate one step of the estimation of the structural function. It should be stressed, however, even assuming the conditional density of covariates given instruments is known up to finite dimensional parameters, does not imply that the conditional expectations of arbitrary basis functions given instruments are necessarily known analytically.




The paper is organized as follows. Subsection \ref{id-discr} discusses the identification result for the case of discrete endogenous covariate $X$ with unbounded support. Section \ref{poly-base} contains the orthogonal polynomial approach for the basis problem. Finally, Section \ref{concl} contains the concluding remarks.

\subsection{An identification result} \label{id-discr}

As it will be shown in Subsection \ref{sub:discrete}, our approach to choosing orthogonal basis works for many cases in which the endogenous variable is discrete and has unbounded support. To be able to talk about such cases we state an identification result that covers those cases. This theorem as well as Theorem 2.2 of \cite{np} follow from Theorem 1 on p.132 of \cite{leh}. We let $X$ denote the endogenous random variable and $Z=\left(\begin{array}{c}Z_1 \\Z_2\end{array}\right)$ denote the vector of instrumental variables.
\begin{prop}\label{discr_density}
Let $X$ be a random variable, with conditional density (w.r.t. either Lebesgue or counting measure) of $X|Z$ given by
\begin{equation*}
p(x|Z=z):=p(x|z)=t(z)s(x,z_1)\prod\limits_{j=1}^d [\mu_j(z)-m_j]^{\tau_j(x,z_1)} \qquad \tau(x,z_1) \in \mathbb{Z}_+^d,
\end{equation*}
where $t(z)>0$, $s(x,z_1)>0$, $\tau(x,z_1)=(\tau_1(x,z_1),\dots,\tau_d(x,z_1))$ is one-to-one in $x$, and the support of $\mu(Z)=(\mu_1(Z),\dots,\mu_d(Z))$ given $Z_1$ contains a non-trivial open set in $\mathbb{R}^d$, and $\mu_j(Z)>m_j~~(Z-a.s.)$ for each $j=1,\dots,d$.
Then
 \begin{equation*}
 E[g(X,Z_1)|Z_1,Z_2]=0 \quad Z-a.s. \quad \text{ implies } \quad g(X,Z_1)=0 \quad (X,Z_1)-a.s.
 \end{equation*}
 \end{prop}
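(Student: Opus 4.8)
The plan is to reduce the claim to a completeness (totality) statement about a family of distributions, and then invoke the cited Theorem 1 on p.\ 132 of \cite{leh}. The hypothesis gives us an explicit product form for the conditional density, and the crucial structural feature is that $\tau(x,z_1)$ is one-to-one in $x$ while $\mu(Z)$ ranges over a set containing a non-trivial open subset of $\mathbb{R}^d$. My strategy is to rewrite the density so that it manifestly becomes an exponential family parametrized by a natural parameter built from $\mu(Z)$, and then to recognize that the completeness theorem of Lehmann applies to such a family over an open parameter set.

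First I would fix $Z_1=z_1$ (conditioning on $Z_1$ throughout, since the conclusion is an a.s.\ statement in $(X,Z_1)$) and manipulate the product $\prod_{j=1}^d[\mu_j(z)-m_j]^{\tau_j(x,z_1)}$ by writing each factor as $\exp\!\big(\tau_j(x,z_1)\log[\mu_j(z)-m_j]\big)$. Setting the natural parameter $\eta_j:=\log[\mu_j(z)-m_j]$, which is well defined because $\mu_j(Z)>m_j$ almost surely, the density takes the canonical exponential-family shape $p(x\mid z)=t(z)\,s(x,z_1)\,\exp\!\big(\sum_{j=1}^d \eta_j\,\tau_j(x,z_1)\big)$, with sufficient statistic $\tau(x,z_1)$ and carrier $s(x,z_1)$. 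The condition that the support of $\mu(Z)$ given $Z_1$ contains a non-trivial open set in $\mathbb{R}^d$ translates, under the coordinatewise logarithm $\eta_j=\log(\mu_j-m_j)$ (a homeomorphism onto its image), into the natural parameter $\eta$ ranging over a set that still contains a non-empty open set in $\mathbb{R}^d$.

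Next I would apply the completeness result for exponential families: a full-rank exponential family whose natural parameter space contains an open set is complete, meaning that $E[g(X,Z_1)\mid Z]=0$ for $Z$-a.s.\ forces $g(X,Z_1)=0$ almost surely. Concretely, one defines $h(x):=g(x,z_1)\,s(x,z_1)$ and observes that $E[g(X,Z_1)\mid Z]=t(z)\int h(x)\,\exp\!\big(\sum_j\eta_j\tau_j(x,z_1)\big)\,d\nu(x)$, where $\nu$ is Lebesgue or counting measure; since $t(z)>0$, the vanishing of the conditional expectation is equivalent to the vanishing of the transform $\int h(x)\exp(\langle\eta,\tau(x,z_1)\rangle)\,d\nu(x)$ for all $\eta$ in a set containing an open set. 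The one-to-one property of $\tau(\cdot,z_1)$ lets us change variables to the sufficient statistic and identify this with a (two-sided) Laplace/moment transform in $\eta$, whose uniqueness theorem (equivalently, the analyticity of the transform on the interior of the parameter set) forces $h\equiv0$, hence $g(x,z_1)=0$ a.s.\ since $s>0$.

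The main obstacle I anticipate is the bookkeeping needed to legitimately invoke Lehmann's theorem rather than the algebraic rewriting, which is routine. Two points require care: first, verifying the full-rank (identifiability) condition — that the components $\tau_1,\dots,\tau_d$ are not affinely dependent — so that the family is genuinely $d$-dimensional and completeness is not lost to a degenerate reparametrization; the injectivity of $\tau(\cdot,z_1)$ in $x$ is what underwrites this. Second, one must ensure the argument is uniform enough in $z_1$ to pass from the fixed-$z_1$ completeness statement to the joint almost-sure conclusion in $(X,Z_1)$, which amounts to applying the completeness theorem for a.e.\ value of $Z_1$ and then integrating out. Since the proposition explicitly states it follows from Theorem 1 on p.\ 132 of \cite{leh}, the cleanest route is to verify that our density meets the hypotheses of that theorem after the exponential-family change of variables, and let the cited result discharge the analytic core.
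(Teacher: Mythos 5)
Your proposal is correct and takes essentially the same route as the paper: the paper's (much terser) proof likewise rewrites the density as $p(x|z)=t(z)s(x,z_1)\exp\!\left[\sum_{i=1}^d \tau_i(x,z_1)\log(\mu_i(z)-m_i)\right]$, sets the natural parameter $\eta_i=\log(\mu_i(z)-m_i)$, and invokes the exponential-family completeness theorem of Lehmann (citing Lehmann--Casella, see also Theorem~1, p.~132 of Lehmann). The details you supply --- the open-set image under the coordinatewise logarithm, the change of variables justified by injectivity of $\tau(\cdot,z_1)$, and the fixed-$z_1$-then-integrate structure of the a.s.\ conclusion --- are exactly what the paper leaves implicit.
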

\begin{proof}\footnote{For the case in which $X$ is discrete an alternative proof can be found in \cite{ky}.} Note that
\[
p(x|z) = t(z)s(x,z_1)\exp{\left[\sum_{i=1}^d\tau_i(x,z_1)\log{(\mu_i(z)-m_i)}\right]}.
\]
Then letting $A(\eta)=0$, and $\eta_i=\log{(\mu_i(z)-m_i)}$, we see that the result follows from \cite{lc}. See also \cite{leh}.
\end{proof}
The above theorem extends Theorem 2.2 in \cite{np}, where it was shown that if with probability one conditional on $Z$, the distribution of $X$ is absolutely continuous w.r.t. Lebesgue measure, and its conditional density is given by
\begin{equation}\label{npdensity}
f_{X|Z}(x|z)= t(z)s(x,z_1) \exp{\left[\mu(z) \cdot \tau(x,z_1)\right]},
\end{equation}
where $t(z)>0$, $s(x,z_1)>0$, $\tau(x,z_1)$ is one-to-one in $x$, and the support of $\mu(Z)$ given $Z_1$ contains a non-trivial open set, then for each $g(x,z_1)$ with finite expectation \mbox{$E[g(X,Z_1)|Z]=0\;\;$} $(Z-a.s.)$ implies that $g(X,Z_1)=0\;\;$ $(X,Z_1)-a.s$.

The condition requiring the support of $\mu(Z)$ given $Z_1$ to contain a nontrivial open set in $\Rea^d$ in both our Theorem \ref{discr_density} and Theorem 2.2 in \cite{np} can be weakened to requiring that the support of $\mu(Z)$ given $Z_1$ be a countable set that is dense in a nontrivial open set in $\Rea^d$.

\section{Polynomial basis results} \label{poly-base}

Once again, let $X$ be a $d$-dimensional endogenous random variable, $Z_1$ and $Z_2$ be the instrumental variables (vectors), and $Z=\left(\begin{array}{c}Z_1 \\Z_2\end{array}\right)$. Now, assume that the conditional distributions of $X$ given $Z$ satisfy the conditions sufficient for solving the identification problem as in Theorem 2.2 of \cite{np}  or as in Proposition \ref{discr_density} of the current paper. Then, for a function $\pi(z)$ in the image space there is a unique function $g(x,z_1)$ in the domain space such that
$$E[g(X,Z_1)~|~Z]=\pi(Z) \qquad Z~a.s.$$
In this section we will use Stein-Markov operators to solve the {\it polynomial basis problem} for a class of conditional distributions $X|Z$. Specifically, we will develop an approach to finding an orthogonal polynomial basis $\{Q_j(x,z_1)\}_{j=0,1,\hdots}$ such that for a.e.  $Z_1=z_1$, and for all $j \in \mathbb{Z}_+^d$, and a function $\mu(Z)$ defined in Section \ref{intro},
$$P_j(\mu(Z))=E[Q_j(X, Z_1)~|~Z ],$$  where $P_j$ is a polynomial of degree $j$.
See \cite{barbour, shao, schoutens, stein} for comprehensive studies and reviews of Stein-Markov operators and Stein's method.
In the examples with no instrumental variable $Z_1$, i.e. $Z=Z_2$, polynomials $Q_j(x,z_1)$ will be denoted by $Q_j(x)$.

\subsection{Sturm-Liouville Equations and Stein operators}
 Let open set $\Omega(z) \in \Rea^d$ be the support of $X$ given $Z=z$, and let $\partial \Omega(z)$ denote the boundary of $\Omega(z)$. Consider a continuous conditional density function $f_{X|Z}(x|z)=s(x,z_1)t(z) e^{\mu(z)^T  \tau(x,z_1)}$ as in Theorem 2.2 in \cite{np} with $x=(x_1,\dots,x_{d})^T$ and $\mu(z)=\big(\mu_1(z),\dots,\mu_{d}(z)\big)^T$ in $\Rea^d$, and $t(z)>0$. Assume that for $a.e.\; Z_1=z_1$,  $~\tau(x,z_1)=\big(\tau_1(x,z_1),\hdots,\tau_d(x,z_1)\big)^T$ is a twice differentiable invertible one-to-one function from  $\Omega(z) \subseteq \Rea^d$ to $\Rea^d$ with nonzero partial derivatives, and $s(x,z_1):\Rea^d \rightarrow \Rea$ is a differentiable function in $x$. 
Next denote by $\nabla_{x,\tau}$ the following first order linear operator 
$$\nabla_{x,\tau} f(x):=\left({\partial \over \partial x_1} \left[{f(x)\over {\partial \tau_1(x,z_1) \over \partial x_1}}\right],\dots , {\partial \over \partial x_d}\left[{f(x)\over {\partial \tau_d(x,z_1) \over \partial x_d}}\right]\right)$$ 

\noindent
 We differentiate  $f_{X|Z}(x|z)$ to obtain
 $$\nabla_{x,\tau} f_{X|Z}(x|z) ={\nabla_{x,\tau} s(x,z_1) \over s(x,z_1)}f_{X|Z}(x|z)+\mu(Z)^T f_{X|Z}(x|z) \qquad \text{ for all } x \in \Omega(z).$$
 The following statement holds for almost every $Z=z$. For a function $Q(x,z_1)$ that is differentiable in $x$ and satisfies $~Q(x,z_1) s(x,z_1) \Big / {\partial \tau_i(x,z_1) \over \partial x_i}=0~$ for each $i$ and each $x\in \partial \Omega(z)$,\footnote{If $\partial \Omega(z)$ contains a singularity or a point at infinity, this statement should be taken to hold in the limit.} we integrate by parts to obtain
 \begin{equation}\label{main}
  E[AQ(X,Z_1)|Z]=-\mu(Z)^TE[Q(X,Z_1)|Z]  \qquad Z ~a.s.,
 \end{equation}
 where
  \begin{equation}\label{eqn:stop}
  AQ(x,z_1)=\frac{1}{s(x,z_1)}\nabla_{x,\tau} [s(x,z_1)Q(x,z_1)]={\big(\nabla_{x,\tau} s(x,z_1) \big) Q(x,z_1)\over s(x,z_1)}+\sum_{i=1}^d { ~\frac{\partial Q(x,z_1)}{\partial x_i}~ \over {\partial \tau_i(x,z_1) \over \partial x_i}}.
   \end{equation}

 \vskip 0.2 in
 \noindent
 Now, for a given $z$, let $L^2(\Rea^d,s(x,z_1))$ denote the space of Lebesgue measurable $u(x,z_1)$ in $x$ such that $\int\limits_{\Omega(z)} u^2(x,z_1)s(x,z_1)\mathrm{d}x<\infty$,  with the inner product
 $$\big< u,v\big>_s:=\int\limits_{\Omega(z)} u(x,z_1) v(x,z_1)s(x,z_1)\mathrm{d}x.$$
 
 \noindent
 Next define the following Sturm-Liouville operator:
$${\cal A}Q :=  {1\over s(x,z_1)}\nabla_{x,\tau} \Big[s(x,z_1)\nabla_x Q(x,z_1))\Big] =  {\nabla_{x,\tau}  s(x,z_1) \cdot \nabla_x Q(x, z_1)\over s(x,z_1)}+\sum_{i=1}^d {1 \over {\partial \tau_i(x,z_1) \over \partial x_i}} \frac{\partial^2 Q(x,z_1)}{\partial x_i^2},$$
where $~\nabla_x :=\left({\partial \over \partial x_1},\dots , {\partial \over \partial x_d} \right)^T$ is standard gradient.
 Here $A$ is a Stein operator for the distribution that has Lebesgue density equal to ${s(x,z_1)\over \int s(x,z_1)\mathrm{d}x}$, and $\mathcal{A}$ is the corresponding Stein-Markov operator.
 
  \vskip 0.2 in
 \noindent
 Then, integration  by parts shows $\cal{A}$ is a {\bf self-adjoint operator} with respect to $\big<\cdot , \cdot \big>_s$. Specifically, $~\big< {\cal A} u,v\big>_s=\big< u,{\cal A}v\big>_s$ provided the following standard boundary conditions
 \begin{equation} \label{eqn:dO}
 \sum_{i=1}^d \int\limits_{\partial \Omega(z)} \left[\left( {\partial \over \partial x_i} u(x,z_1) \right) v(x,z_1) -\left( {\partial \over \partial x_i} v(x,z_1) \right) u(x,z_1)\right]{s(x,z_1) \over {\partial \tau_i(x,z_1) \over \partial x_i}} ~d\Gamma(x)=0
 \end{equation}
 $Z~a.s.$ for all $u(x,z_1)$ and $v(x,z_1)$ in $\mathcal{C}^2(\Rea^d) \cap L^2(\Rea^d,s(x,z_1))$ for almost every  $~Z_1=z_1$. 
 Trivially, the above boundary conditions (\ref{eqn:dO}) are satisfied if
  \begin{equation} \label{eqn:dOt}
  \left( {\partial \over \partial x_i} u(x,z_1) \right) v(x,z_1) -\left( {\partial \over \partial x_i} v(x,z_1) \right) u(x,z_1) \equiv 0 \quad \text{ on }~\partial \Omega(z).
  \end{equation} 
  In the case of a singularity or a point at infinity on the boundary the above boundary conditions (\ref{eqn:dOt}) will need to hold in the limit. The eigenvalues $\lambda_j$ of $\cal{A}$ are all real, and the corresponding eigenfunctions $Q_j(x,z_1)$ solve the following Sturm-Liouville differential equation
 \begin{equation}\label{diffl_eqn1}
 \sum_{i=1}^d {s(x,z_1) \over {\partial \tau_i(x,z_1) \over \partial x_i}} \frac{\partial^2 Q_j(x,z_1)}{\partial x_i^2}+\sum_{i=1}^d \frac{\partial }{\partial x_i} \!\! \left({s(x,z_1) \over {\partial \tau_i(x,z_1) \over \partial x_i}}\right) \frac{\partial Q_j(x,z_1)}{\partial x_i}-\lambda_j s(x,z_1) Q_j(x,z_1)=0.
 \end{equation}
These $~Q_j(x,z_1)~$ form  a basis of $L^2(\Rea^d,s(x,z_1))$, orthogonal with respect to $\big<\cdot , \cdot \big>_s$.

\subsubsection{A special case}
 Assume that  for $a.e. ~Z_1=z_1$, $s(x,z_1)\in C^\infty (\Rea^d)$ w.r.t. variable $x$,
 for each nonnegative integer $j=(j_1,\dots,j_d)$. Consider a special case when
$~Q_j(x,z_1)={(-1)^{j_1+\dots+j_d} \over s(x,z_1)} {\partial^{j_1+\dots+j_d} \over \partial x^{j_1}_1 \dots \partial x^{j_d}_d} s(x,z_1)~$
are the orthogonal eigenfunctions in $L^2(\mathbb{R}^d,s(x,z_1))$, then  their projections
$$P_j(Z):=E[Q_j(X, Z_1) |Z]=\prod_{k=1}^d\mu_k(Z)^{j_k}=\mu(Z)^j$$
due to integration by parts under the boundary conditions requiring the corresponding boundary integral to be zero.

 \vskip 0.2 in
 \noindent
{\bf Example:} In particular, using the Rodrigues' formula for the Sturm-Liouville boundary value problem, we can show that when $$s(x,z_1)=\gamma(z_1)\exp{\left[\alpha(z_1)\frac{x^Tx}{2}+\beta(z_1)\right]},$$ with $\alpha(z_1)<0$ for each $z_1$, there is a series of eigenvalues $\lambda_0,\lambda_1, \lambda_2,...$ that lead to solutions $\{Q_j(x,z_1)\}_{j=0}^\infty$, where each $~Q_j(x,z_1)={(-1)^{j_1+\dots+j_d} \over s(x,z_1)} {\partial^{j_1+\dots+j_d} \over \partial x^{j_1}_1 \dots \partial x^{j_d}_d} s(x,z_1)~$ is a multidimensional Hermite-type orthogonal polynomial basis for $L^2(\mathbb{R}^d,s(x,z_1))$.\footnote{When $s(x,z_1)$ is of this form $Q_j(x,z_1)$ are polynomials. In general equation (\ref{diffl_eqn1}) may have solutions for other $s(x,z_1)$ that are not necessarily polynomials.}

\subsection{The orthogonal polynomial basis results for continuous $X$}\label{res_cts}

We assume that $d=1$ in this subsection with the exception of Example 2 below. Then
$$\frac{\partial f_{X|Z}(x|z)}{\partial x} = \frac{\frac{\partial s(x,z_1)}{\partial x}}{s(x)}f_{X|Z}(x|z)+\mu(z)\frac{\partial \tau(x,z_1)}{\partial x}f_{X|Z}(x|z).$$
and 
$$
AQ(x,z_1) = \frac{\partial}{\partial x}\left(\frac{s(x,z_1)Q(x,z_1)}{\frac{\partial \tau(x,z_1)}{\partial x}}\right)\frac{1}{s(x,z_1)}
=
\frac{\frac{\partial Q(x,z_1)}{\partial x}}{\frac{\partial \tau(x,z_1)}{\partial x}}
+
\frac{\frac{\partial s(x,z_1)}{\partial x}}{s(x,z_1)}
\frac{Q(x,z_1)}{\frac{\partial \tau(x,z_1)}{\partial x}}
-
\frac{Q(x,z_1)\frac{\partial^2 \tau(x,z_1)}{\partial x^2}}{\left[\frac{\partial \tau(x,z_1)}{\partial x}\right]^2}
$$
as in (\ref{eqn:stop}). Once again, equation (\ref{main}) is satisfied  if $~Q(x,z_1)s(x,z_1) \Big/ \frac{\partial \tau(x,z_1)}{\partial x}=0~$ on $\partial \Omega(z)$ for $a.e.\; Z=z$.
here, for $d=1$, Stein-Markov operator is
$$
\mathcal{A}Q(x,z_1) :=A{\partial Q(x,z_1) \over \partial x}= \frac{\frac{\partial^2 Q(x,z_1)}{\partial x^2}}{\frac{\partial \tau(x,z_1)}{\partial x}}
+
\left(\frac{\frac{\partial s(x,z_1)}{\partial x}}{s(x,z_1)}
\frac{1}{\frac{\partial \tau(x,z_1)}{\partial x}}
-
\frac{\frac{\partial^2 \tau(x,z_1)}{\partial x^2}}{\left[\frac{\partial \tau(x,z_1)}{\partial x}\right]^2}
\right)\frac{\partial Q(x,z_1)}{\partial x}.
$$
We would like to find eigenfunctions $Q_j$ and eigenvalues $\lambda_j$  of ${\cal A}$ such that $~\mathcal{A}Q_j=\lambda_jQ_j$.
We define 
$$\phi(x,z_1):=-\frac{1}{\frac{\partial \tau(x,z_1)}{\partial x}} \quad \text{ and } \quad \psi(x,z_1):=-\frac{1}{\frac{\partial \tau(x,z_1)}{\partial x}}\left[\frac{\frac{\partial s(x,z_1)}{\partial x}}{s(x,z_1)}-\frac{\frac{\partial^2 \tau(x,z_1)}{\partial x^2}}{\frac{\partial \tau(x,z_1)}{\partial x}}\right],$$ 
Then Sturm-Liouville differential  equation (\ref{diffl_eqn1}) can be rewritten as
\begin{equation}\label{diffl_eqn}
\phi(x,z_1)\frac{\partial^2 Q(x,z_1)}{\partial x^2}+\psi(x,z_1)\frac{\partial Q(x,z_1)}{\partial x}+\lambda Q(x,z_1)=0.
\end{equation}
with the boundary conditions (\ref{eqn:dOt}) rewritten as
\begin{eqnarray}\label{boundry}
c_1Q(\alpha_1(z_1),z_1)+c_2\frac{\partial Q(\alpha_1(z_1),z_1)}{\partial x} = 0 & & c_1^2+c_2^2>0,\\
d_1Q(\alpha_2(z_1),z_1)+d_2\frac{\partial Q(\alpha_2(z_1),z_1)}{\partial x} = 0 & & d_1^2+d_2^2>0, \nonumber
\end{eqnarray}
where $~\Omega(z)=\big( \alpha_1(z_1), \alpha_2(z_1) \big)$ denotes the support of $X$ conditioned on $Z_1=z_1$. 
The solution to this Sturm-Liouville type problem exists when one of the three sufficient conditions listed below is satisfied. See \cite{szego} and \cite{schoutens}.\footnote{\cite{schoutens} and \cite{szego} give results for Hermite, Laguerre and Jacobi polynomials, the other cases are obtained by defining $\tilde{x}=a x+b$ and applying the results in \cite{schoutens} and \cite{szego}. Also note that these conditions are sufficient for the solutions to be polynomials. Solutions that are not polynomials, but nevertheless form an orthogonal basis might exist under less restrictive conditions.} Moreover, in the cases we list below, the solutions are {\bf orthogonal polynomials} with respect to the weight function $~s(x,z_1)$,
and for each $j$, the corresponding eigenfunction $Q_j(x,z_1)$ is proportional to
$$\frac{1}{s(x,z_1)}\frac{\partial^j }{\partial x^j}\left(s(x,z_1)[\phi(x,z_1)]^j\right).$$
Here $Q_0$ is a constant eigenfunction corresponding to $\lambda_0=0$. 
Finally, iterating equation (\ref{main}) proves the following important result.
\begin{thm}\label{thm:main}
Suppose $Q_j(x,z_1)$ are an orthogonal polynomial basis $Z~a.s.$ Then functions $P_j(Z)=E[Q_j(X,Z_1)|Z]$ are $j^{th}$ order {\bf polynomials} in $\mu(Z)$ with its coefficients being functions of $Z_1$.
\end{thm}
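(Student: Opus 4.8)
The plan is to induct on the degree $j$, using equation (\ref{main}) as the sole engine of the argument, which is exactly what ``iterating equation (\ref{main})'' should mean. I carry this out for $d=1$, the setting of this subsection, and indicate at the end how the product structure handles general $d$. Write $P_j(Z):=E[Q_j(X,Z_1)\mid Z]$, and note that a short computation from (\ref{eqn:stop}) together with the definitions of $\phi$ and $\psi$ rewrites the Stein operator compactly as $AQ=-\phi\,\partial_x Q-\psi\,Q$, so that the Stein--Markov operator is $\mathcal{A}Q=A(\partial_x Q)$ and the eigen-relation becomes $A(\partial_x Q_j)=\lambda_j Q_j$. The base case $j=0$ is immediate: $Q_0$ is the constant eigenfunction with $\lambda_0=0$, so $P_0(Z)=Q_0(z_1)$ is a degree-$0$ polynomial in $\mu(Z)$ with coefficient depending only on $Z_1$.

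For the inductive step, fix $j\ge1$ and suppose $P_0,\dots,P_{j-1}$ are polynomials in $\mu(Z)$ of degrees $0,\dots,j-1$ with $Z_1$-dependent coefficients. Applying $E[\,\cdot\mid Z]$ to the eigen-relation and then invoking (\ref{main}) with $Q=\partial_x Q_j$ gives
$$\lambda_j P_j(Z)=E\big[A(\partial_x Q_j)\mid Z\big]=-\mu(Z)\,E[\partial_x Q_j\mid Z].$$
Since $\partial_x Q_j$ is a polynomial in $x$ of degree exactly $j-1$, I expand it in the orthogonal basis as $\partial_x Q_j=\sum_{k=0}^{j-1}b_k(z_1)Q_k$, whence $E[\partial_x Q_j\mid Z]=\sum_{k=0}^{j-1}b_k(z_1)P_k(Z)$ is, by the induction hypothesis, a polynomial in $\mu(Z)$ of degree at most $j-1$. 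Dividing by $\lambda_j$ then presents $P_j(Z)$ as $-\mu(Z)\lambda_j^{-1}\sum_{k=0}^{j-1}b_k(z_1)P_k(Z)$, a polynomial in $\mu(Z)$ of degree at most $j$ with coefficients that are functions of $Z_1$. The degree is exactly $j$: the top coefficient $b_{j-1}$ is nonzero because $\partial_x Q_j$ genuinely has degree $j-1$, and $P_{j-1}$ has exact degree $j-1$ by hypothesis, so the summand $\mu(Z)\,b_{j-1}P_{j-1}(Z)$ contributes a nonvanishing degree-$j$ term. This closes the induction, and the general $d$ follows verbatim: there the operators act coordinatewise and $\mu(Z)^j=\prod_k\mu_k(Z)^{j_k}$, so the one-dimensional recursion runs independently in each coordinate and assembles into a total-degree-$j$ polynomial in $\mu(Z)$.

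Two points carry the real weight. The first, and the main obstacle, is the division by $\lambda_j$, which demands $\lambda_j\neq0$ for every $j\ge1$. This is not an extra hypothesis but is forced by the assumption that the $\{Q_j\}$ form a genuine polynomial basis: under the Pearson-type conditions ($\deg\phi\le2$, $\deg\psi\le1$) that produce polynomial eigenfunctions, matching leading coefficients in (\ref{diffl_eqn}) gives $\lambda_j=-j\big[(j-1)\phi_2+\psi_1\big]$, where $\phi_2,\psi_1$ are the coefficients of $x^2$ in $\phi$ and of $x$ in $\psi$, and the existence of a degree-$j$ eigenpolynomial at every level is precisely the statement that these are nonzero (equivalently, distinct) for $j\ge1$. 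The second point is that (\ref{main}) must legitimately apply to $Q=\partial_x Q_j$; this requires $(\partial_x Q_j)\,s/(\partial_x\tau)$ to vanish on $\partial\Omega(z)$, the very boundary condition under which (\ref{main}) was derived, which holds for the classical weights---interpreted in the limit at a singular endpoint or at infinity, as in (\ref{eqn:dOt}) and (\ref{boundry}). Everything else is bookkeeping of degrees, so once these two items are secured the induction delivers the theorem.
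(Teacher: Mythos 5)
Your inductive core is the same as the paper's: expand $\partial_x Q_j$ in the orthogonal basis, take $E[\,\cdot\,|Z]$, apply (\ref{main}), and divide by $\lambda_j$. Your version is in fact slightly cleaner: you apply (\ref{main}) once to $\partial_x Q_j$ rather than term by term, you add the exact-degree bookkeeping, and you note explicitly that (\ref{main}) must be applicable to $\partial_x Q_j$; your recursion also repairs a small slip in the paper's display, where the $i=0$ term appears without its factor of $-\mu(Z)$.

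The genuine divergence, and the one gap, is the key step $\lambda_j\neq 0$. The paper derives it from the identification (completeness) condition: if $\lambda_j=0$, then $0=E[\mathcal{A}Q_j(X,Z_1)|Z]=E[A\,\partial_x Q_j(X,Z_1)|Z]=-\mu(Z)E[\partial_x Q_j(X,Z_1)|Z]$, and completeness forces $\partial_x Q_j\equiv 0$, contradicting $\deg Q_j=j\ge 1$. You instead use the leading-coefficient identity $\lambda_j=-j[(j-1)\phi_2+\psi_1]$, which is correct, but you then assert that existence of a degree-$j$ eigenpolynomial at every level is ``precisely'' the statement that these quantities are nonzero. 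That equivalence is false: take $\phi(x)=x^2$ and $\psi(x)=-3x$ on $(0,\infty)$, which satisfies your Pearson-type degree conditions; every monomial $x^k$ solves (\ref{diffl_eqn}) with $\lambda_k=-k(k-4)$, so degree-$j$ eigenpolynomials exist at every level, yet $\lambda_4=0$. What excludes such cases is not eigenpolynomial existence but the rest of the hypothesis you were handed: orthogonality with respect to a positive weight $s$ that is a finite measure with finite moments (here the Pearson weight is $x^{-5}$, which is not), or equivalently the boundary conditions. In fact, your ``second point'' already proves your ``first point'': in divergence form $\mathcal{A}Q_j=-\frac{1}{s}\,\frac{\partial}{\partial x}\!\left[s\phi\,\frac{\partial Q_j}{\partial x}\right]$, so $\lambda_j=0$ gives $s\phi\,\partial_x Q_j\equiv c$ on $\Omega(z)$, while the boundary condition under which (\ref{main}) applies to $\partial_x Q_j$ is exactly that $s\phi\,\partial_x Q_j$ vanishes on $\partial\Omega(z)$ (in the limit); hence $c=0$, so $Q_j$ is constant, a contradiction. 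Alternatively, in each of the paper's three listed cases the stated sign conditions make $(j-1)\phi_2+\psi_1\neq 0$ at once. Either patch closes the gap; as written, the sentence claiming the equivalence is the one step that would not survive scrutiny. Finally, the closing remark that general $d$ follows ``verbatim'' coordinatewise presumes product structure in $s$ and $\tau$ that is not assumed; since the theorem sits in a subsection with the standing assumption $d=1$, you should simply drop it.
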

\begin{proof}
Observe that $P_0 \equiv Q_0$ is a constant. Consider $j>0$, since $f_{X|Z}(x|z)$ satisfies the unique identification condition stated in Theorem 2.2 of \cite{np} (that in turn is a Corollary of Theorem 1 of \cite{leh}), 
$E[\mathcal{A}Q_j(X,Z_1)|Z] =\lambda_j E[Q_j(X,Z_1)|Z]\not=0$. Therefore $\lambda_j \not=0$, and since $~\mathcal{A}Q_j=\lambda_jQ_j$, 
$$P_j(Z)=E[Q_j(X,Z_1)|Z]={1 \over \lambda_j}E[\mathcal{A}Q_j(X,Z_1)|Z] ={1 \over \lambda_j}E\left[A{\partial \over \partial x}Q_j(X,Z_1) \Big| Z\right] ,$$
where $~{\partial \over \partial x}Q_j(x,z_1)=\sum\limits_{i=0}^{j-1} a_i Q_i(x,z_1)$ is a polynomial of degree $j-1$ in $x$. Therefore
$$P_j(Z)={a_0P_0 \over \lambda_j}+\sum\limits_{i=1}^{j-1} {a_i \over \lambda_j} E[AQ_i(X,Z_1)~|Z]={a_0P_0 \over \lambda_j}-\mu(Z)\sum\limits_{i=1}^{j-1} {a_i \over \lambda_j} P_i(Z)$$
by  (\ref{main}). The statement of the theorem follows by induction.
\end{proof}

 \vskip 0.2 in
 \noindent
Next we list the sufficient conditions for the eigenfunctions $\{Q_j(x,z_1)\}_{j=0}^\infty$ to be orthogonal polynomials in $x$ that form a basis in $L^2(\mathbb{R}^d,s(x,z_1))$, together with the corresponding examples of continuous conditional densities $f_{X|Z}(x|z)$. 

\begin{enumerate}
\item \textbf{Hermite-like polynomials:} $\phi$ is a non-zero constant, $\psi$ is linear and the leading term of $\psi$ has the opposite sign of $\phi$. In this case, let $\phi(x,z_1)=c(z_1)\neq 0$, then $\tau(x,z_1)=-\frac{1}{c(z_1)}x+d(z_1)$. Then, $\psi(x,z_1)=c(z_1)\frac{\frac{\partial s(x,z_1)}{\partial x}}{s(x,z_1)}=a(z_1)x+b(z_1)$. Thus, we have $\frac{\frac{\partial s(x,z_1)}{\partial x}}{s(x,z_1)}=\frac{a(z_1)}{c(z_1)}x+\frac{b(z_1)}{c(z_1)}$. Let $\alpha(z_1):=a(z_1)/c(z_1)$ and $\beta(z_1):=b(z_1)/c(z_1)$, where $\alpha(z_1)< 0$ $\forall z_1$, since $a(z_1)$ and $c(z_1)$ always have opposite signs. Solving for $s(x,z_1)$ we get $s(x,z_1)=\gamma(z_1)\exp{\big(\alpha(z_1)x^2/2+\beta(z_1)x\big)}$. 
 \vskip 0.2 in
 \noindent
\textbf{Example 1:} Given a function $\sigma(z_1) \not=0$, and suppose $d=1$. Consider
$$f_{X|Z}(x|z) = \frac{1}{\sqrt{2\pi\sigma^2(z_1)}}\exp{\left\{-\frac{(x-\tilde{\mu}(z))^2}{2\sigma^2(z_1)}\right\} }.$$ 
Then $~~t(z)=\frac{1}{\sqrt{2\pi\sigma^2(z_1)}}\exp{\left\{-{z_2^2 \over 2\sigma^2(z_1)}\right\} }$, $~~s(x,z_1)=\exp{\left\{-{x^2 \over 2\sigma^2(z_1)}\right\} }$, $~~\mu(z)=\tilde{\mu}(z)/\sigma^2(z_1)$, and ${\tau(x,z_1)=x}$. The orthogonal polynomials $Q_j(x,z_1)$ are
 $$Q_j(x,z_1)=(-1)^j e^{{x^2\over 2\sigma^2(z_1)}}\frac{\mathrm{d}^j}{\mathrm{d}x^j}e^{-{x^2\over 2\sigma^2(z_1)}},$$
$P_j(z)={\tilde{\mu}(z)^j\over \sigma^{2j}(z_1)}=\big[\mu(z) \big]^j$ and $\lambda_j=-j$ for each $j>1$. 
 \vskip 0.2 in
 \noindent
\textbf{Remark:} In \cite{hh1} it is assumed that 
\begin{align*}
\left(\begin{array}{c} X \\ Z_2\end{array}\right)|Z_1=z_1 ~~\sim N\left(\left(\begin{array}{c}\mu_X(z_1)\\ \mu_{Z_2}(z_1)\end{array}\right), \left[\begin{array}{cc}
\sigma^2_X(z_1) & \sigma_{XZ_2}(z_1)
\\
\sigma_{XZ_2}(z_1) & \sigma^2_{Z_2}(z_1)
\end{array}\right]\right).
\end{align*}
This corresponds to Example 1 above with 
$$\tilde{\mu}(z_1,z_2)=\mu_{X}(z_1)+\frac{\sigma_{XZ_2}(z_1)}{\sigma^2_X(z_1)}(z_2-\mu_{Z_2}(z_1))$$
and
$$\sigma^2(z_1) = \left[1-\frac{\sigma^2_{XZ_2}(z_1)}{\sigma^2_X(z_1)\sigma^2_{Z_2}(z_1)}
\right]\sigma^2_X(z_1).$$

\vskip 0.2 in
 \noindent
\textbf{Example 2:} Suppose $d>1$.
 For $x=(x_1,\dots,x_d)^T$ and $z_2=(z'_1,\dots,z'_d)^T$, let
 $f_{X|Z}(x|z)={\sqrt{\det M} \over (2\pi)^{d \over 2}}e^{-{(x-z_2)^T M (x-z_2) \over 2}}$, where $M=M(z_1)$ is the inverse of the variance-covariance $d \times d$ matrix function with $~\det M(z_1) >0$.
 Then $~t(z)={\sqrt{\det M} \over (2\pi)^{d \over 2}}e^{-{z^T Mz \over 2}}$, $~s(x,z_1)=e^{-{x^TMx\over 2}}$, $\mu(z)=M z_2$, and $\tau(x,z_1)=x$.  For each nonnegative integer-valued $j=(j_1,\dots,j_d)$, the orthogonal polynomial $Q_j(x,z_1)$ is given by
 $$Q_j(x,z_1)=(-1)^{j_1+\dots+j_d} e^{x^T Mx \over 2} {\partial^{j_1+\dots+j_d} \over \partial^{j}x^{j_1}_1 \dots \partial x^{j_d}_d} e^{-{x^T Mx \over 2}}.$$
Then
 $$P_j(Z)=E[Q_j(X) |Z]~=(e_1^T M Z_2)^{j_1}\dots (e_d^T M Z_2)^{j_d}~=\big(e_1 \cdot \mu(Z)\big)^{j_1}\dots\big(e_d \cdot \mu(Z)\big)^{j_d}=\big[\mu(z) \big]^j,$$
where $e_1,\hdots,e_d$ denote standard basis vectors, and for any vector $w=(w_1,w_2,\hdots, w_d)^T$, $~w^j:=w_1^{j_1}w_2^{j_2}\hdots w_d^{j_d}$ .

\item \textbf{Laguerre-like polynomials:} $\phi$ and $\psi$ are both linear, the roots of $\phi$ and $\psi$ are different, and the leading terms of $\phi$ and $\psi$ have the same sign if the root of $\psi$ is less than the root of $\phi$ or vice versa.\\
Suppose $\phi(x,z_1)=a(z_1)x+b(z_1)$ and $\psi(x,z_1)=c(z_1)x+d(z_1)$ with $b(z_1)/a(z_1)\neq d(z_1)/c(z_1)$. Then
\[
\frac{\partial \tau(x,z_1)}{\partial x}=\frac{1}{-a(z_1)x-b(z_1)},
\]
so
\[
  \tau(x,z_1)=\frac{1}{a(z_1)}\log[{a(z_1)x+b(z_1)|}+C(z_1).
  \]
 Moreover,
\[
 \psi(x,z_1)=[a(z_1)x+b(z_1)]\frac{\frac{\partial s(x,z_1)}{\partial x}}{s(x,z_1)}+a(z_1)=c(z_1)x+d(z_1)
 \Leftrightarrow
  \frac{\frac{\partial s(x,z_1)}{\partial x}}{s(x,z_1)}=\frac{c(z_1)x+d^*(z_1)}{a(z_1)x+b(z_1)},
 \]
where $d^*(z_1)=d(z_1)-a(z_1)$. This means that
 \[
 s(x,z_1)=\rho(z_1)\exp{\left\{\int \frac{c(z_1)x+d^*(z_1)}{a(z_1)x+b(z_1)}\mathrm{d}x\right\}}.
 \]

 \vskip 0.2 in
 \noindent \textbf{Example:} Suppose $d=1$. Let $\delta, r >0$ and a function $g: \mathbb{R} \rightarrow \mathbb{R}$ be given, and let $\Gamma(\cdot )$ denote the gamma function. Consider
 \[
 f_{X|Z}(x|z)={1 \over \Gamma(r+z_2)}\delta^{r+z_2} \big(x-g(z_1)\big)^{r+z_2-1}e^{-\delta (x-g(z_1))}~~\text{ for }~x > g(z_1),
 \] where $Z_2>-r$. Then $~~t(z) = {1 \over \Gamma(r+z_2)}\delta^{r+z_2}$, $~~s(x,z_1)= \big(x-g(z_1)\big)^{r-1}e^{-\delta (x-g(z_1))}$, $\mu(z)=z_2$, and $~~\tau(x,z_1)=\log{\big(x-g(z_1)\big)}$, since $ \big(x-g(z_1)\big)^{z_2}=e^{z_2\log{(x-g(z_1))}}$. In this case, $\phi(x,z_1)=- \big(x-g(z_1)\big)$ and $\psi(x,z_1) = \delta  \big(x-g(z_1)\big) -r$. The orthogonal polynomials $Q_j(x,z_1)$ are
 $$Q_j(x,z_1)={{ \big(x-g(z_1)\big)^{-(r-1)}e^{\delta  (x-g(z_1))}}\over j!} ~\frac{\mathrm{d}^j}{\mathrm{d}x^j}\left[ \big(x-g(z_1)\big)^{j+r-1}e^{-\delta (x-g(z_1))}\right],$$
for $j>1$, $~P_j(z)=z_2(z_2-1)\cdots (z_2-n+1)$, and $~\lambda_j = -\delta j$.

\item \textbf{Jacobi-like polynomials:} $\phi$ is quadratic, $\psi$ is linear, $\phi$ has two distinct real roots, the root of $\psi$ lies between the two roots of $\phi$, and the leading terms of $\phi$ and $\psi$ have the same sign.

In this case,
\[
\frac{\partial \tau(x,z_1)}{\partial x}=-\frac{1}{(x-r_1(z_1))(x-r_2(z_1))},
\]
with $r_1\neq r_2$ and $x$ not equal to either one of them. In this case, however, $\tau$ is not one-to-one on $x$, and the condition given in Theorem 2.2 of \cite{np} does not hold unless specific support conditions are met.

Solving the last differential equation we get
\[
\tau(x,z_1)=\frac{1}{r_1(z_1)-r_2(z_1)}\left[\log{|x-r_2(z_1)|}-\log{|x-r_1(z_1)|}\right]+c(z_1).
\]
Plugging this into the formula for $\psi$ yields
\[
\psi(x,z_1)=(x-r_1(z_1))(x-r_2(z_1))\left[\frac{\frac{\partial s(x,z_1)}{\partial x}}{s(x,z_1)}
+\frac{2x-r_1(z_1)-r_2(z_1)}{(x-r_1(z_1))(x-r_2(z_1))}\right]=a(z_1)x+b(z_1).
\]
Rearranging terms gives us
\begin{eqnarray*}
\frac{\frac{\partial s(x,z_1)}{\partial x}}{s(x,z_1)}
& = &
-\frac{2x-r_1(z_1)-r_2(z_1)}{(x-r_1(z_1))(x-r_2(z_1))}\\
& + &
\frac{1}{r_1(z_1)-r_2(z_1)}\left[\frac{a(z_1)r_1(z_1)+b(z_1)}{x-r_1(z_1)}-\frac{a(z_1)r_2(z_1)+b(z_1)}{x-r_2(z_1)}\right]\\
& =: & \kappa(x,z_1).
\end{eqnarray*}
Let $\alpha(x,z_1):=\int \kappa(x,z_1)\mathrm{d}x$. Then
\begin{eqnarray*}
\alpha(x,z_1) & = & -\log{|(x-r_1(z_1))(x-r_2(z_2))|}\\
& + & \frac{a(z_1)r_1(z_1)+b(z_1)}{r_1(z_1)-r_2(z_1)}\log{|x-r_1(z_1)|}\\
& - & \frac{a(z_1)r_2(z_1)+b(z_1)}{r_1(z_1)-r_2(z_1)}\log{|x-r_2(z_1)|},
\end{eqnarray*}
and
\[
s(x,z_1) = \rho(z_1)\exp{[\alpha(x,z_1)]}.
\]

 \vskip 0.2 in
 \noindent \textbf{Example:}  Suppose for simplicity that there is no $Z_1$ (so that $z=z_2$), and 
$$f_{X|Z}(x|z)={1 \over {\cal B}(a+z,b-z)}x^{a+z-1}(1-x)^{b-z-1}~~\text{ for }x\in (0,1),$$ 
where $\mathcal{B}(\cdot,\cdot)$ denotes the beta function. Suppose the following condition is satisfied:
 \begin{equation}\label{Jacobi_boundary}
\lim\limits_{x \rightarrow 0+} x^{a+Z} Q(x)=\lim\limits_{x \rightarrow 1-} (1-x)^{b-Z} Q(x)=0 \qquad Z-a.s.
\end{equation}
 We also assume the support of $Z$ is in $(-a,b)$. Then $~\mu(z)=z$, $~t(z)={{\cal B}(a,b) \over {\cal B}(a+z,b-z)}$, and $~s(x)={1\over {\cal B}(a,b)}x^{a-1}(1-x)^{b-1}$. Finally, $\tau(x)=\log{\left({x\over 1-x}\right)}$ since $\left({x\over 1-x}\right)^z=\exp{\left[z\log{\left({x\over 1-x}\right)}\right]}$. Then $\phi(x)=-x(1-x)$ and
 $\psi(x)=(a-b)x-a$. The orthogonal polynomial $Q_j$ are the scaled Jacobi polynomials and satisfy the following hypergeometric differential equations of Gauss:
$$x(1-x)Q_j''+(a -(a+b)x)Q_j'+j(j+a+b-1)Q_j=0$$
for each degree $j=0,1,\dots$. See section 4.21 of \cite{szego}, and \cite{ww}. These scaled Jacobi polynomials can be expressed with the hypergeometric functions
 $$Q_j(x) :=P_j^{(a-1,b-1)}(1-2x)={(\alpha)_j \over j!} \cdot ~ _2F_1(-j,j+a+b-1;a; x) ~,$$
 where $(\alpha)_j:=\alpha (\alpha+1)\cdots (\alpha+j-1)$, and for $c \notin\mathbb{Z}_-$, $_2F_1(a,b;c; x):=\sum_{j=0}^\infty \frac{(a)_j(b)_jx^j}{(c)_j j!}$.
 Note that these $Q_j$'s satisfy equation (\ref{Jacobi_boundary}). Moreover, the eigenvalues are $~\lambda_j=-j(j+a+b-1)$ and for $j>1$,
 $$P_j(Z)=E[Q_j(X) |Z]=-{Z \over \lambda_j} E[Q_j'(X)|Z].$$

\end{enumerate}

\subsection{The orthogonal polynomial basis results for  discrete $X$}\label{sub:discrete}

Here we show that the orthogonal polynomial basis results of the previous section go through when $X$ is discrete and satisfies the conditions in Theorem \ref{discr_density}. Suppose for simplicity $X$ is one-dimensional with its conditional distribution given by
\begin{equation}\label{discr_density1}
P(X=x|Z=z):=p(x|z)=t(z)s(x,z_1)[\mu(z)-m]^x
\end{equation}
for $$x\in a+\mathbb{Z}_+=\{a, a+1, a+2,\dots\},$$ where $\mu(Z)>m\; a.s.$, and a given $-\infty \le a< \infty$.

\vskip 0.2 in
\noindent
For a function $h$, define respectively the backwards and forwards difference operators as
\begin{eqnarray*}
\nabla h(x)& := & h(x)-h(x-1), \\
\Delta h(x) & := & h(x+1)-h(x).
\end{eqnarray*}
Let $A h(x,z_1) :=  \frac{s(x-1,z_1)}{s(x,z_1)}\nabla h(x,z_1)-\left[m+\frac{s(x-1,z_1)}{s(x,z_1)}\right]h(x,z_1)$, and let $s(a-1,z_1)=0$ for almost every $Z=z$.
\begin{lem}
Suppose $g$ is such that $E[g(X,Z_1)]<\infty$. Then  $$E[Ag(X,Z_1)|Z]=-\mu(Z)E[g(X,Z_1)|Z] \qquad (Z-a.s.)$$
\end{lem}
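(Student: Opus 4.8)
The statement is the discrete counterpart of equation (\ref{main}), and the plan is to prove it by a direct computation of the conditional expectation, with summation by parts (Abel summation) playing the role that integration by parts plays in the continuous case. First I would write the conditional expectation explicitly against the conditional mass function as
$$E[Ag(X,Z_1)\mid Z=z] = \sum_{x\in a+\mathbb{Z}_+} Ag(x,z_1)\,p(x\mid z), \qquad p(x\mid z)=t(z)s(x,z_1)[\mu(z)-m]^x,$$
and substitute the definition of $A$. The coefficient $s(x-1,z_1)/s(x,z_1)$ cancels against the factor $s(x,z_1)$ in $p(x\mid z)$, so every summand becomes a multiple of $t(z)[\mu(z)-m]^x$. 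Expanding $\nabla g(x,z_1)=g(x,z_1)-g(x-1,z_1)$, the two contributions carrying $s(x-1,z_1)g(x,z_1)$ — one from $\nabla g$, one from the bracketed coefficient $-[m+s(x-1,z_1)/s(x,z_1)]$ — cancel exactly, leaving each summand equal to $-\big(s(x-1,z_1)g(x-1,z_1)+m\,s(x,z_1)g(x,z_1)\big)[\mu(z)-m]^x$ times $t(z)$.

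The key step is then to reindex the piece involving $s(x-1,z_1)g(x-1,z_1)[\mu(z)-m]^x$ by the shift $x\mapsto x+1$. This is the discrete analogue of moving the difference operator onto the weight: the shift extracts a factor $[\mu(z)-m]$ and generates a single boundary term at $x=a-1$, which vanishes precisely because of the convention $s(a-1,z_1)=0$. After the shift both remaining sums run over the same index set $a+\mathbb{Z}_+$ and are proportional to $\sum_x g(x,z_1)p(x\mid z)=E[g(X,Z_1)\mid Z=z]$, with combined coefficient $-\big([\mu(z)-m]+m\big)=-\mu(z)$. This delivers $E[Ag(X,Z_1)\mid Z]=-\mu(Z)E[g(X,Z_1)\mid Z]$ exactly.

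The main obstacle is not the algebra but the justification of these manipulations over an infinite support: splitting the expectation into two series and reindexing one of them requires absolute convergence, which is exactly what the hypothesis $E[g(X,Z_1)]<\infty$ (used as $E[|g(X,Z_1)|]<\infty$) supplies, so that the rearrangement is legitimate and the telescoping boundary contribution is genuinely annihilated by $s(a-1,z_1)=0$. In the case $a=-\infty$ the same computation goes through provided the decay of $s(x,z_1)[\mu(z)-m]^x$ forces the boundary term to vanish in the limit $x\to-\infty$, which is the discrete version of the boundary condition imposed in the continuous setting following equation (\ref{eqn:stop}).
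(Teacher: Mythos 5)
Your proposal is correct and follows essentially the same route as the paper's own proof: expand $Ag$ against the conditional mass function $t(z)s(x,z_1)[\mu(z)-m]^x$, cancel the two $s(x-1,z_1)g(x,z_1)$ contributions, shift the index in the remaining $s(x-1,z_1)g(x-1,z_1)$ sum to pull out a factor $[\mu(z)-m]$ (with the boundary term annihilated by the convention $s(a-1,z_1)=0$), and combine coefficients to get $-\big([\mu(z)-m]+m\big)=-\mu(z)$. Your added remarks on absolute convergence justifying the series rearrangement and on the limiting boundary term when $a=-\infty$ are refinements of details the paper leaves implicit, not a different argument.
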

\begin{proof}
\begin{eqnarray*}
E[Ag(X,Z_1)|Z] & = & \sum_{x\in a+\mathbb{Z}_+}\frac{s(x-1,Z_1)}{s(x,Z_1)}[g(x,Z_1)-g(x-1,Z_1)]t(z)s(x,Z_1)[\mu(Z)-m]^x\\
& - & \sum_{x\in a+\mathbb{Z}_+} \left[m+\frac{s(x-1,Z_1)}{s(x,Z_1)}\right]g(x,Z_1)t(z)s(x,Z_1)[\mu(Z)-m]^x
\\
& = &
[m-\mu(Z)]  \sum_{x\in a+\mathbb{Z}_+} g(x-1,Z_1)t(z)s(x-1,Z_1)[\mu(Z)-m]^{x-1}\\
& - & m \sum_{x\in a+\mathbb{Z}_+} g(x,Z_1)t(z)s(x,Z_1)[\mu(Z)-m]^{x}=-\mu(Z)E[g(X,Z_1|Z].
\end{eqnarray*}
\end{proof}
Note that the result holds when the support of $p(x|z)=P(x=x|Z=z)$ is $$a-\mathbb{Z}_+=\{\dots,a-2, a-1, a\}$$ with $-\infty<a <\infty,~$ $~A h(x,z_1) :=  \frac{s(x+1,z_1)}{s(x,z_1)}\Delta h(x,z_1)-\left[m+\frac{s(x+1,z_1)}{s(x,z_1)}\right]h(x,z_1),~$ and
\mbox{$s(a+1,z_1)=0$} for almost every $Z=z$.

\vskip 0.2 in
\noindent
From the above lemma we see that equation (\ref{main}) holds, and iterating on that equation yields
\begin{equation}\label{iki}
 E[A^k g(X)|Z]=(-\mu(Z))^kE[g(X)|Z].
 \end{equation}
The corresponding Stein-Markov operator ${\cal A}$ is defined as ${\cal A}h=A\Delta h$.  The eigenfunctions of ${\cal A}$ are orthogonal polynomials $Q_j$ such that
 $${\cal A}Q_j(x,z_1)=\lambda_j Q_j(x,z_1).$$
See \cite{szego}, \cite{schoutens}.  Then by (\ref{main}) and (\ref{iki}) we have
\[
\lambda_jE[Q_j(X, Z_1)|Z]=E[A\Delta Q_j (X)|Z] = -\mu(Z) E[\Delta Q_j(X, Z_1)|Z],
\]
so that
$$E[Q_j(X, Z_1) |Z]={-\mu(Z) \over \lambda_j} E[\Delta Q_j(X, Z_1)|Z]$$ for $j>1$.  Thus, we know recursively that $P_j(Z):=E[Q_j(X, Z_1) |Z]$ is a $j$-th degree polynomial in $\mu(Z)$, as in Theorem \ref{thm:main} of the preceding subsection.

\vskip 0.2 in
\noindent
We now present the following specific examples.
\begin{enumerate}
    \item \textbf{Charlier polynomials:} Suppose there is no $Z_1$, and $X|Z$ has a Poisson distribution with density $p(x|z)= {e^{-(\tilde{m}_0+z)}[\tilde{m}_0+z]^x\over x!}=e^{-z}{e^{-\tilde m_0}\tilde m_0^x\over x!}\left[1+{z\over \tilde m_0}\right]^x$, for $x\in \mathbb{N}$, so that $t(z)=e^{-z}$, $s(x)={e^{-\tilde m_0}\tilde m_0^x\over x!}$, $m_0=1$, and $\mu(z)={z\over \tilde m_o}$. Then $Ah(x)=h(x)-{x\over \tilde m_0}h(x-1)$ is the Stein operator. The eigenfunctions of the Stein-Markov operator are the Charlier polynomials
 $Q_j(x)=C_j(x;\tilde m_0)(x)=\sum_{r=0}^j \binom{j}{r} (-1)^{j-r}\tilde m_0^{-r}x(x-1)\dots(x-r+1)$
 which are orthogonal w.r.t. Poisson-Charlier weight measure $\rho(x):={e^{-\tilde m_0} \tilde m_0^x \over x!}\sum_{k=0}^{\infty}\delta_k(x)$, where $\delta_k(x)$ equals 1 if $k=x$, and 0 otherwise. See \cite{schoutens}.
 Finally,\\
 $P_j(Z)=E[Q_j(X) |Z]=\sum_{r=0}^j \sum_{x=r}^{\infty} e^{-(\tilde m_0+Z)}{(\tilde m_0+Z)^x \over (x-r)!} \binom{j}{r} (-1)^{j-r}\tilde m_0^{-r}={Z^j \over \tilde m_0^j}.$
    \item \textbf{Meixner polynomials:}  Suppose there is no $Z_1$, and for $x\in \mathbb{N}$ and $\alpha$ an integer greater than or equal to 1, $p(x|z)=\binom{x+\alpha -1}{x}
    p^\alpha[1-p+\mu(z)]^xt(z)$,  where
    $t(z)=\left[\sum_{x=0}^\infty{\Gamma(x+\alpha)\over x! \Gamma(\alpha)}p^\alpha [1-p+\mu(z)]^x\right]^{-1}$. The above lemma applies with
     $s(x)=\binom{x+\alpha -1}{x}p^\alpha$, $m_0=1-p$. Then $Ah(x)= (1-p) h(x)-{x\over x+\alpha}h(x-1)$ is the Stein operator. The eigenfunctions of the Stein-Markov operator are the Meixner polynomials
 $Q_j(x)=M_j(x;\alpha,p)(x)=\sum_{k=0}^j (-1)^{k}\binom{j}{k}\binom{x}{k}k!(x-\alpha)_{j-k}p^{-k}$, where $(a)_j:=a(a+1)\hdots (a+j-1)$.
 which are orthogonal w.r.t. weight measure $\rho(x):=s(x)\sum_{k=0}^{\infty}\delta_k(x)$. 
\end{enumerate}

\subsection{Extension to Pearson-like and Ord-like Families}

Suppose there is no $Z_1$, i.e. $Z=Z_2$.  Suppose $\phi(x)$ is a polynomial of degree at most two and $\psi(x)$ is a decreasing linear function on an interval $(a,b)$. Also $\phi(x)>0$ for $a<x<b$,  $\phi(a)=0$ if $a$ is finite, and $\phi(b)=0$ if $b$ is finite.
If $\xi$ is a random variable with either Lebesgue density or density with respect to counting measure $f(x)$ on $(a,b)$ that satisfies
\begin{equation} \label{eqn:ord}
D[\phi(x) f(x)] = \psi(x)f(x),
\end{equation}
where $D$ denotes derivative when $\xi$ is continuous, and the forward difference operator $\Delta$ when $\xi$ is discrete. Then the above relation (\ref{eqn:ord}) describes the {\bf Pearson family} when $\xi$ is continuous and {\bf Ord family}, when $\xi$ is discrete. Many continuous distributions fall into the Pearson family, and many discrete ones fall into Ord's family. See \cite{schoutens} and the references therein.

\vskip 0.2 in
\noindent
Suppose $\xi$ is a random variable in either Pearson or Ord family. Following \cite{schoutens}, define its Stein operator as 
$$AQ(x) = \phi(x) D^*Q(x)+\psi(x) Q(x)$$
for all $Q$ such that $~E[Q(\xi)]< \infty~$ and $~E[D^*Q(\xi)]<\infty$, where $D^*$ denotes the derivative when $\xi$ is continuous and the backwards difference operator $\nabla$ when $\xi$ is discrete. Then $E[AQ(\xi)]=0$.
Let the corresponding Stein-Markov operator, $\cal{A}$, be defined as $\mathcal{A} Q:= ADQ$.

\vskip 0.2 in
\noindent
Now, consider a Stein operator $AQ(x) = \phi(x) D^*Q(x)+\psi(x) Q(x)$ together with the corresponding Stein-Markov operator $\cal{A}$ for some random variable in either Pearson or Ord family.  Let $Q_j$ be the orthogonal polynomial eigenfunctions of $\cal{A}$. Consider random variables $X$ and $Z$, where the conditional distribution of $X$ given $Z$ is such that the Stein operator of $X$ given $Z$ equals
$$A_\mu Q=\phi D^*Q+(\psi+c\mu(Z))Q,$$
where $c$ is a constant. Then $E[A_\mu Q(X)|Z]=0$. Now, since $Q_j$ are eigenfunctions of $\cal{A}$,
\begin{eqnarray*}
\lambda_jE[Q_j(X)|Z] & = & E[\mathcal{A}Q_j(X)|Z]=E[ADQ_j(X)|Z]=E[(A-A_\mu)DQ_j(X)|Z]
\\
& = & -c\mu(Z)E[DQ_j(X)|Z].
\end{eqnarray*}
Letting $P_j(Z):=E[Q_j(X)|Z]$ we see that $P_j$'s are $j^{th}$-order polynomials in $\mu(Z)$ as $DQ_j(x)$ can be expressed as a linear combination of $Q_0(x),Q_1(x),\hdots,Q_{j-1}(x)$ in the above equation analogous to (\ref{main}). Thus our main result Theorem \ref{thm:main} applies whenever the Stein operator of $X|Z$ is expressed as $A_\mu Q=\phi D^*Q+(\psi+c\mu(Z))Q$. The question then arises for which, if any, conditional distributions of $X|Z$ the Stein operator is of this form. It should be pointed out that this current approach extends to multidimensional discrete $X|Z$, and other types of distributions with well defined Stein operators. We now give some examples for such discrete distributions.\\

\noindent \textbf{Examples:}
\begin{enumerate}
%

    \item {\bf Binomial distribution:} It is known that
    $$AQ(x) = (1-p)x\nabla Q(x)+[pN-x]Q(x)$$
    is the Stein operator for a Binomial random variable with parameters $N$ and $p$. In this case, $\phi(x) = (1-p)x$ and $\psi(x)=pN-x$. See \cite{schoutens}.

    Suppose $X|Z\sim Bin (N+\mu(Z),p)$, with $\mu(Z)\in \mathbb{Z}_+$. Then
    \[
    A_\mu Q(x) = (1-p)x\nabla Q(x)+[pN+p\mu(Z)-x]Q(x)
    \]
 Let $Q_{-1}(x):= 0$, $Q_1(x)=0$, and \mbox{$Q_j(x)=K_j(x,N,p)=\sum_{l=0}^j (-1)^{j-l}\binom{N-x}{j-l}\binom{x}{l}p^{j-l}(1-p)^l$}, the Krawtchouk polynomials, are orthogonal with respect to the binomial $Bin(N,p)$ distribution.

    \item {\bf Pascal / Negative binomial distribution:}  It is known that
    $$AQ(x) = x\nabla Q(x)+[(1-p)\alpha-px]Q(x)$$
    is the Stein operator for a negative binomial random variable with parameters $\alpha$ and $p$. In this case, $\phi(x) = x$ and $\psi(x)=(1-p)\alpha-px$. See \cite{schoutens}.

    Suppose
$$P(X=x|Z=z)=p(x|z)=\binom{x+\alpha+\mu(z)-1}{x}p^{\alpha+\mu(z)}(1-p)^x,$$
for $x\in \mathbb{N}_+$. Then
\[
A_\mu Q(x) = x\nabla Q(x)+[(1-p)\alpha+(1-p)\mu(Z)-px]Q(x)
\]
In this case, $Q_j=M_j(x;\alpha,p)$, where $M_j(x;\alpha,p)$ denote Meixner polynomials which were defined in the previous section and are orthogonal with respect to the Pascal distribution with parameter vector $(\alpha,p)$.

\end{enumerate}

\section{Conclusion} \label{concl}

In this paper we introduced an identification problem for nonparametric and semiparametric models in the case when the conditional distribution of $X$ given $Z$ belongs to the generalized power series distributions family.\footnote{We borrow this term from \cite{jkk}} Using an approach based on differential equations, Sturm-Liouville theory specifically, we solved orthogonal polynomial basis problem for the conditional expectation transformation, $E[g(X)|Z]$.
Finally, we discussed how our polynomial basis results can be extended to the case when the conditional distribution of $X|Z$ belongs to either the modified Pearson or modified Ord family.

In deriving our results we encountered a second order differential (or difference, in the case of discrete $X$) equation with boundary values, which is a Sturm-Luiouville type equation. In this paper we focused on cases in which the solutions to the Sturm-Liuouville problem, which are the eigenfunctions of the operator $\mathcal{A}$, are an orthogonal polynomial basis.
Our approach is more general than this. In particular, one might question for what conditional distributions the eigenfunctions of the Stein-Markov operator $\cal{A}$ are orthogonal basis functions, but not necessarily orthogonal polynomials. Our paper does not address this question. Addressing this question is left for future research. Finally, the work of applying the orthogonal polynomial basis approach for estimating structural functions is nearing completion.


\bibliographystyle{amsplain}

\end{document}